\theoremstyle{definition}
\newtheorem{Def}{Definition}[section]
\newtheorem{Thm}[Def]{Theorem}
\newtheorem{Prop}[Def]{Proposition}
\newtheorem{Rem}[Def]{Remark}
\newtheorem{Ex}[Def]{Example}
\newtheorem{Cor}[Def]{Corollary}
\newtheorem{Lem}[Def]{Lemma}
\numberwithin{equation}{section}
\title{Congruence relations satisfied by quaternionic modular forms}
\author{Shoyu Nagaoka}
\date{}
\begin{document}

\maketitle

\begin{abstract}
The theory of quaternionic modular forms has been studied for decades as
an example of the modular forms of many variables. The purpose of this study is to provide some
congruence relations satisfied by such quaternionic modular forms.

\end{abstract}

\noindent
\textbf{Keywords}\quad quaternionic modular forms,\; Eisenstein series
\\
\\
\textbf{Mathematics Subject Classification}\quad 11F33,\;11F55

\section{Introduction}
\label{intro}
The basis of the theory of quaternionic modular forms can be found in a study by Krieg \cite{K1}, who developed the theory. For example, he analyzed the structure of the (quaternionic) Maass
space and succeeded in obtaining an explicit formula of the Fourier coefficient of the quaternionic
Eisenstein series of degree 2.
Furthermore, he determined the structure of the graded ring of quaternionic modular forms, also 
in the case of degree 2. By contrast, except for \cite{K-N}, the $p$-adic and mod $p$ theories have not been fully studied.

In this paper, we refer to the arithmetic properties of quaternionic modular forms and provide 
the following results on the mod $p$ theory of quaternionic modular forms:
\vspace{2mm}
\\
$\bullet$\quad Ramanujan-type congruence (Theorem \ref{Ram})
\vspace{1mm}
\\
$\bullet$\quad Congruences arising from the theta operator (Theorem \ref{Theta})
\vspace{1mm}
\\
$\bullet$\quad Congruences mod 23 (Theorem \ref{mod23})
\vspace{1mm}
\\
$\bullet$\quad Congruences of Eisenstein series (Theorem \ref{CongEis})
\vspace{4mm}
\\
\section{Preliminary}
\label{preliminary}
The notations used in this study are from Krieg's study \cite{K3}. 

We denote by $\mathcal{O}$ the order of Hurwitz quaternions, i.e.,
$$
\mathcal{O}=\mathbb{Z}e_0+\mathbb{Z}e_1+\mathbb{Z}e_2+\mathbb{Z}e_3,
$$
where $e_0=\frac{1}{2}(e_1+e_2+e_3+e_4)$ and $1=e_1$,\;$e_2$,\;$e_3$,\;$e_4$ is the
standard basis for real quaternions. Let $\mathcal{O}^\sharp$ be the dual lattice, that is,
$$
\mathcal{O}^\sharp
=\mathbb{Z}2e_1+\mathbb{Z}(e_1+e_2)+\mathbb{Z}(e_1+e_3)+\mathbb{Z}(e_1+e_4).
$$
If $\tau$ denotes the reduced trace form, then the elements of the dual lattice of
$$
\text{Sym}(2;\mathcal{O})
=\{\,S\in\text{Mat}(2;\mathcal{O})\,\mid\,S={}^t\overline{S}\,\}
$$
with respect to $\tau$ is given in the following form:
$$
T=
\begin{pmatrix} n  &  \frac{1}{2}t \\ \frac{1}{2}\overline{t} & m \end{pmatrix}
\in\text{Sym}^\tau(2;\mathcal{O}),
$$
where $n,\,m\in\mathbb{Z}$,\,$t\in\mathcal{O}^\sharp$.
Given $0 \ne T\in \text{Sym}^ \tau (2; \mathcal{O})$, the ``greatest common divisor'' of
$T$ is defined by
$$
\varepsilon (T):=\text{max}\{\,d\in\mathbb{N}\,\mid\, d^{-1}T\in\text{Sym}^\tau (2;\mathcal{O})\,\}.
$$
Let $[\Gamma_2(\mathcal{O}),k]$ denote the vector space consisting of modular forms of
weight $k$ for the quaternionic modular group $\Gamma_2(\mathcal{O}):=\text{Sp}(2,\mathcal{O})$
and $[\Gamma_2(\mathcal{O}),k]_0$ denote the subspace of the cusp forms.

Each quaternionic modular form $f\in [\Gamma_2(\mathcal{O}),k]$\;$(k \equiv 0 \pmod{2})$
possesses a Fourier expansion of the form
$$
f(Z)=\sum_{\substack{T\in\text{Sym}^\tau (2;\mathcal{O}) \\ T\geq 0}}
              a(f;T)e^{2\pi i\tau (T,Z)},\quad Z\in H(2;\mathbb{H}),
$$
where $H(2;\mathbb{H})$ is the quaternionic upper-half space of degree 2.
For a subring $R\subset\mathbb{C}$, we denote by $[\Gamma_2(\mathcal{O}),k;R]$ the set
consisting of $f\in [\Gamma_2(\mathcal{O}),k]$ with $a(f;T)\in R$ for all $T$.

A modular form $f\in [\Gamma_2(\mathcal{O}),k]$ belongs to the 
{\it Maass space $\mathcal{M}(k,\mathbb{H})$ of weight $k$} if and only if a function 
$a_f^*: \mathbb{N}\cup\{ 0\}\longrightarrow \mathbb{C}$ exists such that
all $0\ne T\in\text{Sym}^\tau (2;\mathcal{O})$,\,$T\geqq 0$ satisfy
$$
a(f;T)=\sum_{\substack{d\in\mathbb{N} \\ d\mid\varepsilon (T)}}
                       d^{k-1}a_f^*(2\text{det}(T)/d^2).
$$
According to Andrianov \cite{An}, this condition can be equivalently replaced by the
Maass relation
$$
a(f;T)=\sum_{\substack{d\in\mathbb{N} \\ d\mid\varepsilon (T)}}
              d^{k-1}a\left(f;\begin{pmatrix} 1 & t/2d \\ \overline{t}/2d & mn/d^2\end{pmatrix}\right).
$$
We set $\mathcal{M}(k,\mathbb{H};R)=\mathcal{M}(k,\mathbb{H})\cap [\Gamma_2(\mathcal{O}),k;R]$
and $\mathcal{M}(k,\mathbb{H};R)_0=\mathcal{M}(k,\mathbb{H})\cap [\Gamma_2(\mathcal{O}),k;R]_0$.

A typical example of an element in $\mathcal{M}(k,\mathbb{H})$ is the (quaternionic) Eisenstein
series $E_{k,\mathbb{H}}$\;$(k>6,\,\text{even})$.
\begin{Thm} (Krieg \cite[Theorem 3]{K3})
\label{FC}
 {\it
\quad Let $k>6$ be an even integer. The Eisenstein series $E_{k,\mathbb{H}}$ is an element of
$\mathcal{M}(k,\mathbb{H};\mathbb{Q})$ and has the Fourier expansion
$$
E_{k,\mathbb{H}}(Z)=1+\sum_{\substack{T\in{\rm Sym}^\tau (2;\mathcal{O}) \\
T\geqq 0,T\ne 0}}a(E_{k,\mathbb{H}};T)e^{2\pi i\tau (T,Z)},
$$
where
$$
a(E_{k,\mathbb{H}};T)=\sum_{\substack{d\in\mathbb{N} \\ d\mid\varepsilon (T)}}
                               d^{k-1}\,a_k^*(2{\rm det}(T)/d^2)
$$
for $0\ne T\in{\rm Sym}^\tau (2,\mathcal{O})$,\;$T\geq 0$, and
$$
a_k^*(\ell)=
\begin{cases} \displaystyle
-\frac{2k}{B_k}    & \text{if\; $\ell=0$}\\
\displaystyle
-\frac{4k(k-2)}{(2^{k-2}-1)\,B_k\,B_{k-2}}[\sigma_{k-3}(\ell)-2^{k-2}\sigma_{k-3}(\ell/4)]
                       & \text{if\; $\ell\in\mathbb{N}$}.
\end{cases}
$$
Here, $B_m$ is the $m$-th Bernoulli number and
$$
\sigma_m(\ell)=
\begin{cases}
\sum_{d\mid \ell}d^m     & \text{if\; $\ell\in\mathbb{N}$}, \\
0                              & \text{otherwise}.
\end{cases}
$$
}
\end{Thm}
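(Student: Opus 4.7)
The plan is to exploit the standard Eisenstein series machinery for the quaternionic symplectic group of genus $2$, following the pattern of the classical Siegel case. First I would write $E_{k,\mathbb{H}}$ as an absolutely convergent series
$$
E_{k,\mathbb{H}}(Z)=\sum_{M\in \Gamma_\infty\backslash\Gamma_2(\mathcal{O})} \det(CZ+D)^{-k},
$$
where $\Gamma_\infty$ is the Siegel parabolic subgroup of $\Gamma_2(\mathcal{O})$. Convergence for $k>6$ is the reason for the weight restriction. The constant term at infinity is $1$ since the identity coset contributes $1$ and the other cosets give terms that vanish in the $\mathrm{Im}(Z)\to\infty$ limit.

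Next I would compute the Fourier coefficients by the Siegel--Braun unfolding: expanding the sum according to the lower row $(C,D)$ up to $\Gamma_\infty$-equivalence reduces $a(E_{k,\mathbb{H}};T)$ to a singular series, i.e.\ a product of local densities over the finite primes multiplied by an archimedean gamma factor. For $T$ primitive (so $\varepsilon(T)=1$) this immediately yields $a_k^*(2\det T)$, and the global normalizing constant involves $\zeta(1-k)$ and $\zeta(3-k)$; applying $\zeta(1-k)=-B_k/k$ and $\zeta(3-k)=-B_{k-2}/(k-2)$ accounts for the $-4k(k-2)/(B_kB_{k-2})$ factor.

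The delicate part is the prime-by-prime local density computation. For odd primes $p$ the Hurwitz order $\mathcal{O}\otimes\mathbb{Z}_p$ is isomorphic to $\mathrm{Mat}(2;\mathbb{Z}_p)$, so the local situation is essentially that of Sp$(4)$ over $\mathbb{Z}_p$ and the local factor collapses by standard formulas to the elementary divisor sum $\sigma_{k-3}(\ell)$ at that prime. At $p=2$, however, the quaternion algebra is ramified and $\mathcal{O}\otimes\mathbb{Z}_2$ is the maximal order in the division quaternion algebra over $\mathbb{Q}_2$; the local density must be evaluated directly and produces the anomalous combination $\sigma_{k-3}(\ell)-2^{k-2}\sigma_{k-3}(\ell/4)$ together with the normalizing factor $(2^{k-2}-1)^{-1}$. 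This $2$-adic computation is the main obstacle: one must enumerate representations of $T$ modulo powers of $2$ over a non-split order, and the ramification is precisely what injects the $\sigma_{k-3}(\ell/4)$ correction (with the convention $\sigma_{k-3}(\ell/4)=0$ when $4\nmid\ell$).

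Finally, to verify that $E_{k,\mathbb{H}}\in\mathcal{M}(k,\mathbb{H};\mathbb{Q})$, I would not compute $a(E_{k,\mathbb{H}};T)$ for general $T$ from scratch; instead I would observe that the same local density calculation applied to a non-primitive $T$ with $\varepsilon(T)=\varepsilon$ produces, by elementary divisor theory, exactly the sum $\sum_{d\mid\varepsilon}d^{k-1}a_k^*(2\det(T)/d^2)$. This is the Andrianov form of the Maass relation recorded in Section~\ref{preliminary}, so membership in the Maass space and the formula for $a_k^*$ are obtained simultaneously. Rationality of the coefficients is then immediate from the explicit expression in terms of Bernoulli numbers and divisor sums.
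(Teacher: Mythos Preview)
The paper does not supply a proof of this theorem at all: it is quoted verbatim from Krieg \cite[Theorem~3]{K3} and immediately followed by Remark~\ref{R1}, with no argument given. So there is nothing in the paper to compare your proposal against.

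Your unfolding/local-density sketch is a reasonable outline of one way such formulas are obtained, and you correctly identify the $2$-adic ramification of the Hurwitz order as the source of the $\sigma_{k-3}(\ell/4)$ correction and the $(2^{k-2}-1)^{-1}$ factor. Note, however, that Krieg's own route in \cite{K3} is different from what you describe: as the present paper recalls in the proof of Lemma~\ref{FourierX14}, Krieg builds an explicit isomorphism $\Omega\colon\mathcal{M}(k,\mathbb{H})\to\mathfrak{M}_{k-2}\subset[\Gamma_0(4),k-2]$ and reads off the Fourier coefficients of $E_{k,\mathbb{H}}$ from those of its image, an explicit level-$4$ elliptic Eisenstein series. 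That approach bypasses the direct $2$-adic density computation entirely, trading it for the (easier) Fourier expansion of a one-variable Eisenstein series on $\Gamma_0(4)$. Your singular-series approach would work too, but the ``delicate'' $p=2$ step you flag is genuinely nontrivial and is exactly what Krieg's lifting construction avoids.
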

\begin{Rem}
\label{R1}
The condition ``$k>6$'' is necessary for the convergence of the Eisenstein series.
However, it is known that the above formula for the Fourier coefficients holds when $k$
is 4 or 6. This is justified by the so-called Hecke sum. See aslo \cite[Remark 1]{K3}.
\end{Rem}
Let $E_k$ denote the elliptic Eisenstein series of weight $k$ whose Fourier expansion
is
$$
E_k(z)=1-\frac{2k}{B_k}\sum_{n=1}^\infty\sigma_{k-1}(n)e^{2\pi iz},\quad z\in\mathbb{H}_1,
$$
where $\mathbb{H}_1$ is the upper-half plane.

For the Siegel operator $\Phi$, we obtain
$$
\Phi (E_{k,\mathbb{H}})=E_k
$$
when $k\geq 4$ is even.
\vspace{4mm}
\\
Herein, we introduce examples of cusp forms.

We set
\begin{align}
& X_{10}:=\frac{17}{161280}\left(E_{4,\mathbb{H}}E_{6,\mathbb{H}}-E_{10,\mathbb{H}}\right),
\label{x10}
\\
&X_{12}:=\frac{21421}{203212800}
            \left(\frac{441}{691}E_{4,\mathbb{H}}^3+\frac{250}{691}E_{6,\mathbb{H}}^2-E_{12,\mathbb{H}}\right).
            \label{x12}
\end{align}
Herein, these cusp forms are normalized as
$$
a\left(X_{10};\begin{pmatrix}1 & \tfrac{e_1+e_2}{2} \\ \tfrac{e_1-e_2}{2} & 1\end{pmatrix}\right)
=
a\left(X_{12};\begin{pmatrix}1 & \tfrac{e_1+e_2}{2} \\ \tfrac{e_1-e_2}{2} & 1\end{pmatrix}\right)=1
$$
and
$$
X_k\in \mathcal{M}(k,\mathbb{H};\mathbb{Z})_0\qquad (k=10,\,12).
$$
These forms are part of the set of generators of the graded ring of the quaternionic modular
forms of degree 2 constructed by Krieg \cite{K4}.
\section{Congruence properties of quaternionic modular forms}
\label{section3}
In this section, we provide some results regarding the congruence properties of quaternionic
modular forms of degree 2.

To study the congruence properties of the Eisenstein series, we consider a constant
multiple of $E_{k,\mathbb{H}}$:
\begin{equation}
\label{EisG}
G_{k,\mathbb{H}}:=-\frac{(2^{k-2}-1)\,B_k\,B_{k-2}}{4k(k-2)}\,E_{k,\mathbb{H}}.
\end{equation}
The following result is a simple consequence of Theorem \ref{FC} and Remark \ref{R1}.
\begin{Prop}
\label{FourierCoefG}
{\it Assume that $k\geq 4$ is even.
\vspace{2mm}
\\
{\rm (1)}\quad We thus assume that $T\ne O_2$. Then,
$$
a(G_{k,\mathbb{H}};T)=\sum_{d\mid\varepsilon (T)}d^{k-1}b_k^*(2{\rm det}(T)/d^2)
$$
and
$$
b_k^*(\ell)=
\begin{cases} \displaystyle
\frac{(2^{k-2}-1)\,B_{k-2}}{2(k-2)}    & \text{if\; $\ell=0$}\\
\sigma_{k-3}(\ell)-2^{k-2}\sigma_{k-3}(\ell/4)
                       & \text{if\; $\ell\in\mathbb{N}$}
\end{cases}
$$
{\rm (2)}\quad $\displaystyle a(G_{k,\mathbb{H}};O_2)=-\frac{(2^{k-2}-1)\,B_k\,B_{k-2}}{4k(k-2)}$.
}
\end{Prop}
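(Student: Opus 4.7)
The plan is to observe that $G_{k,\mathbb{H}}$ is, by definition, a scalar multiple of $E_{k,\mathbb{H}}$, so every Fourier coefficient of $G_{k,\mathbb{H}}$ equals the corresponding coefficient of $E_{k,\mathbb{H}}$ times the constant
$$
c_k := -\frac{(2^{k-2}-1)B_kB_{k-2}}{4k(k-2)}.
$$
The entire proof amounts to substituting into the formulas of Theorem \ref{FC} and checking that the factor $c_k$ cancels the corresponding factor in $a_k^\ast$. Remark \ref{R1} is invoked so that these formulas are legitimate in the boundary weights $k=4$ and $k=6$, where the series itself is defined via Hecke summation rather than as an absolutely convergent sum.

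For part (2), the constant term $a(E_{k,\mathbb{H}};O_2)=1$ immediately gives $a(G_{k,\mathbb{H}};O_2)=c_k$, matching the claim. For part (1), I would pull $c_k$ inside the divisor sum from Theorem \ref{FC}:
$$
a(G_{k,\mathbb{H}};T)=\sum_{d\mid\varepsilon(T)}d^{k-1}\bigl(c_k\cdot a_k^\ast(2\det(T)/d^2)\bigr),
$$
and then check the two cases for the inner function. For $\ell\in\mathbb{N}$, the factor $-\frac{4k(k-2)}{(2^{k-2}-1)B_kB_{k-2}}$ appearing in $a_k^\ast(\ell)$ is exactly $c_k^{-1}$, so the product collapses to $\sigma_{k-3}(\ell)-2^{k-2}\sigma_{k-3}(\ell/4)=b_k^\ast(\ell)$. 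For $\ell=0$, multiplying $a_k^\ast(0)=-2k/B_k$ by $c_k$ cancels the $2k$ and $B_k$ factors, leaving precisely $(2^{k-2}-1)B_{k-2}/(2(k-2))=b_k^\ast(0)$.

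There is no real obstacle here; the statement is an exercise in bookkeeping of signs and common factors, which is why the author bills it as a ``simple consequence'' of Theorem \ref{FC}. The one point deserving care is to flag the use of Remark \ref{R1} for $k=4,6$, because the later applications (for instance to the cusp forms $X_{10}$ and $X_{12}$ introduced via products involving $E_{4,\mathbb{H}}$ and $E_{6,\mathbb{H}}$) will depend on having the Fourier coefficient formula of $G_{k,\mathbb{H}}$ available in those weights.
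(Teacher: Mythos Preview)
Your proposal is correct and matches the paper's approach: the paper gives no explicit proof but simply declares the proposition a ``simple consequence of Theorem~\ref{FC} and Remark~\ref{R1},'' which is exactly the scalar-multiplication bookkeeping you carry out. Your explicit verification of the cancellations for $\ell=0$, $\ell\in\mathbb{N}$, and $T=O_2$, together with the invocation of Remark~\ref{R1} for $k=4,6$, supplies precisely the details the paper omits.
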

\subsection{Ramanujan-type congruence}
Let
$$
\Delta (z)=q\,\prod_{n\geq 1}(1-q^n)^{24}\qquad (q=e^{2\pi iz})
$$
be the delta function, which is an elliptic cusp form of weight 12. The Ramanujan tau function
$\tau (n)$\;$(n\in\mathbb{N})$ is defined by
$$
\Delta (z)=\sum_{n=1}^\infty \tau (n)q^n.
$$
Ramanujan's congruence
$$
\tau (n) \equiv \sigma_{11}(n) \pmod{691}
$$
is interpreted as a congruence relation between the weight 12 elliptic Eisenstein series $G_{12}$ and $\Delta$ as follows:
\begin{equation}
\label{Ram}
G_{12} \equiv \Delta \pmod{691},
\end{equation}
where $G_k$ is defined as
$$
G_k:=-\frac{B_k}{2k}\,E_k.
$$

The following provides some results, which are the quaternionic version of (\ref{Ram}).
\begin{Thm}
\label{Ram} {\it 
Let $G_{k,\mathbb{H}}$ be the Eisenstein series defined in {\rm (\ref{EisG})}.
If a prime number $p\geq 5$ satisfies the conditions
$$
(*)\qquad\qquad
{\rm ord}_p\left(\frac{(2^{k-2}-1)\,B_{k-2}}{k-2}   \right)>0,\quad
{\rm ord}_p\left(\frac{B_k}{k}\right)\geq 0,
$$
then there is a quaternionic cusp form
$$
\chi_k\in [\Gamma_2(\mathcal{O}),k;\mathbb{Z}_{(p)}]_0
$$
such that
$$
G_{k,\mathbb{H}} \equiv \chi_k \pmod{p},
$$
where $\mathbb{Z}_{(p)}$ denotes the ring of $p$-integral rational numbers.
}
\end{Thm}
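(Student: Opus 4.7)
The plan is to construct $\chi_k$ as a correction of $G_{k,\mathbb{H}}$ by a $p$-divisible modular form whose Siegel image matches that of $G_{k,\mathbb{H}}$. Because $G_{k,\mathbb{H}}$ is itself a scalar multiple of $E_{k,\mathbb{H}}$, we cannot remove its ``Eisenstein part'' by subtracting a multiple of the same weight-$k$ Eisenstein series; the correction must be built from lower-weight Eisenstein series.

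First I would verify from Proposition \ref{FourierCoefG} that $G_{k,\mathbb{H}} \in [\Gamma_2(\mathcal{O}),k;\mathbb{Z}_{(p)}]$: the Fourier coefficients at positive definite $T$ are integers (sums of $d^{k-1}$ times differences of divisor sums), while those at $T=O_2$ and at $T\neq O_2$ with $\det T=0$ are rational multiples of $\frac{(2^{k-2}-1)B_k B_{k-2}}{k(k-2)}$ and $\frac{(2^{k-2}-1)B_{k-2}}{k-2}$ respectively, all of which lie in $p\mathbb{Z}_{(p)}$ by condition $(*)$. Applying the Siegel operator to (\ref{EisG}) and using $\Phi(E_{k,\mathbb{H}})=E_k$ then gives
$$
\Phi(G_{k,\mathbb{H}}) \;=\; \frac{(2^{k-2}-1)B_{k-2}}{2(k-2)}\,G_k,
$$
which by the same hypothesis lies in $p\mathbb{Z}_{(p)}\cdot G_k$.

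Next I would lift $G_k$ back to the quaternionic setting. Since $p\geq 5$, the classical structure theorem gives $M_*(\Gamma_1;\mathbb{Z}_{(p)})=\mathbb{Z}_{(p)}[E_4,E_6]$; together with the second clause of $(*)$ (which ensures $G_k$ itself is $p$-integral), this yields $G_k=P(E_4,E_6)$ for some $P\in\mathbb{Z}_{(p)}[X,Y]$ of the appropriate weight. A direct computation from Theorem \ref{FC} shows that $E_{4,\mathbb{H}}$ and $E_{6,\mathbb{H}}$ themselves have integer Fourier coefficients, so $F_0:=P(E_{4,\mathbb{H}},E_{6,\mathbb{H}})$ lies in $[\Gamma_2(\mathcal{O}),k;\mathbb{Z}_{(p)}]$, and from $\Phi(E_{4,\mathbb{H}})=E_4$, $\Phi(E_{6,\mathbb{H}})=E_6$ we get $\Phi(F_0)=G_k$. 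Setting
$$
\chi_k \;:=\; G_{k,\mathbb{H}} - \frac{(2^{k-2}-1)B_{k-2}}{2(k-2)}\,F_0
$$
then does the job: $\Phi(\chi_k)=0$ so $\chi_k$ is cuspidal; both summands are $p$-integral so $\chi_k\in[\Gamma_2(\mathcal{O}),k;\mathbb{Z}_{(p)}]_0$; and the subtracted piece is $p$-divisible by the first clause of $(*)$, giving $\chi_k\equiv G_{k,\mathbb{H}}\pmod{p}$.

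The principal obstacle is the middle step: showing that $G_k$ admits a polynomial expression in $E_4$ and $E_6$ with $p$-integral coefficients, not merely rational ones. This rests on the structural fact that for $p\geq 5$ the ring of $p$-integral level-one elliptic modular forms coincides with $\mathbb{Z}_{(p)}[E_4,E_6]$ (since $1728$ is then a $p$-unit, so the relation $\Delta=(E_4^3-E_6^2)/1728$ becomes polynomial), which is precisely what the restriction $p\geq 5$ is there to secure. Once this structural input is in hand, the remaining verifications are bookkeeping with Fourier coefficients.
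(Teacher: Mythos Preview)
Your proposal is correct and follows essentially the same route as the paper: apply $\Phi$ to $G_{k,\mathbb{H}}$, use condition $(*)$ to see that the image is $p$-divisible, invoke the structure theorem $M_*(\Gamma_1;\mathbb{Z}_{(p)})=\mathbb{Z}_{(p)}[E_4,E_6]$ to write the quotient as a $p$-integral polynomial in $E_4,E_6$, lift via $E_{4,\mathbb{H}},E_{6,\mathbb{H}}$, and subtract. The only cosmetic difference is that the paper factors the Siegel image as $p\cdot f$ and lifts $f$, whereas you keep the explicit constant $\tfrac{(2^{k-2}-1)B_{k-2}}{2(k-2)}$ and lift $G_k$ directly; the resulting $\chi_k$ and the logical structure are the same.
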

\begin{proof}
We apply the Siegel $\Phi$-operator to $G_{k,\mathbb{H}}$. Based on the assumption $(*)$
and Proposition \ref{FourierCoefG}, $\Phi (G_{k,\mathbb{H}})$ is an elliptic
modular form of weight $k$, whose Fourier coefficients are all divisible by $p$.
Hence, we can write 
$$
\Phi (G_{k,\mathbb{H}})=p\cdot f,
$$
where $f$ is an elliptic modular form of weight $k$ with $p$-integral Fourier
coefficients. Because $f$ has $p$-integral Fourier coefficients, we can write 
$$
f=P(E_4,E_6)\quad \text{with}\quad P(X_1,X_2)\in\mathbb{Z}_{(p)}[X_1,X_2].
$$
We then set
$$
F:=P(E_{4,\mathbb{H}},E_{6,\mathbb{H}}).
$$
Herein, we note that both $E_{4,\mathbb{H}}$ and $E_{6,\mathbb{H}}$ have $p$-integral
Fourier coefficients and $\Phi (E_{k,\mathbb{H}})=E_k\,(k=4,\,6)$.

If we set
$$
\chi_k:=G_{k,\mathbb{H}}-p\cdot F,
$$
by the above construction, we then obtain $\Phi (\chi_k)=0$ and $G_{k,\mathbb{H}} \equiv \chi_k \pmod{p}$.
\end{proof}
\begin{Ex}
(1)\quad When $k = 10$, we can take $p = 17$ as a prime number $p$ that satisfies the
condition $(*)$ and $\chi_{10}=X_{10}$, where $X_{10}$ is defined in (\ref{x10}).
We then obtain
$$
G_{10,\mathbb{H}} \equiv X_{10} \pmod{17}.
$$
As numerical examples,
{\footnotesize
\begin{align*}
1=
a\left(G_{10,\mathbb{H}};\begin{pmatrix} 1 & \tfrac{e_1+e_2}{2}\\ \tfrac{e_1-e_2}{2} & 1 \end{pmatrix}
            \right) & \equiv
  a\left(X_{10};\begin{pmatrix} 1 & \tfrac{e_1+e_2}{2} \\ \tfrac{e_1-e_2}{2} & 1\end{pmatrix}\right)
=1 \pmod{17}\\
129=
a\left(G_{10,\mathbb{H}};\begin{pmatrix} 1 & 0 \\ 0 & 1 \end{pmatrix}
            \right) & \equiv
  a\left(X_{10};\begin{pmatrix} 1 & 0 \\ 0 & 1\end{pmatrix}\right)
=-24 \pmod{17}\\
2188=
a\left(G_{10,\mathbb{H}};\begin{pmatrix} 1 & \tfrac{e_1+e_2}{2}\\ \tfrac{e_1-e_2}{2} & 2 \end{pmatrix}
            \right) & \equiv
  a\left(X_{10};\begin{pmatrix} 1 & \tfrac{e_1+e_2}{2} \\ \tfrac{e_1-e_2}{2} & 2\end{pmatrix}\right)
=12 \pmod{17}\\
{}                 &  \vdots \\
\end{align*}
}
(2)\quad When $k = 14$, we can take $p = 691$. We then set
\begin{equation}
\label{x14}
X_{14}:=E_{4,\mathbb{H}}X_{10}.
\end{equation}
We thus obtain
\begin{equation}
\label{Qanalogy}
G_{14,\mathbb{H}} \equiv X_{14} \pmod{691}.
\end{equation}
As numerical examples,
{\footnotesize
\begin{align*}
1=
a\left(G_{14,\mathbb{H}};\begin{pmatrix} 1 & \tfrac{e_1+e_2}{2}\\ \tfrac{e_1-e_2}{2} & 1 \end{pmatrix}
            \right) & \equiv
  a\left(X_{14};\begin{pmatrix} 1 & \tfrac{e_1+e_2}{2} \\ \tfrac{e_1-e_2}{2} & 1\end{pmatrix}\right)
=1 \pmod{691}\\
2049=
a\left(G_{14,\mathbb{H}};\begin{pmatrix} 1 & 0 \\ 0 & 1 \end{pmatrix}
            \right) & \equiv
  a\left(X_{14};\begin{pmatrix} 1 & 0 \\ 0 & 1\end{pmatrix}\right)
=-24 \pmod{691}\\
177148=
a\left(G_{14,\mathbb{H}};\begin{pmatrix} 1 & \tfrac{e_1+e_2}{2}\\ \tfrac{e_1-e_2}{2} & 2 \end{pmatrix}
            \right) & \equiv
  a\left(X_{14};\begin{pmatrix} 1 & \tfrac{e_1+e_2}{2} \\ \tfrac{e_1-e_2}{2} & 2\end{pmatrix}\right)
=252 \pmod{691}\\
{}                 &  \vdots \\
\end{align*}
}
As we will show later (Lemma \ref{FourierX14}), the Fourier coefficient $a(X_{14};T)$ is given as
\begin{align*}
& a(X_{14};T)=\sum_{d\mid\varepsilon (T)}d^{13}\tau^*(2\text{det}(T)/d^2),\\
& \tau^*(\ell)=\tau(\ell)-2^{12}\tau (\ell/4),
\end{align*}
if $\text{rank}(T) = 2$. By contrast,
\begin{align*}
& a(G_{14,\mathbb{H}};T)=\sum_{d\mid\varepsilon (T)}d^{13}\,b_{14}^*(2\text{det}(T)/d^2),\\
& b_{14}^*(\ell)=\sigma_{11}(\ell)-2^{12}\sigma_{11}(\ell/4).
\end{align*}
Therefore, the fact (\ref{Qanalogy}) reproduces
the congruence relation $\tau (n)\equiv \sigma_{11}(n) \pmod{691}$.
\vspace{2mm}
\\
(3)\quad Table 1 lists the prime numbers $p$ that satisfy the condition $(*)$.
\begin{table}[htbp]
\caption{Primes that satisfy $(*)$}
\begin{center}
\begin{tabular}{c||l|l|l|l|l|l|l|l|l|}
$k$  & $4$ & $6$ & $8$ & $10$ & $12$ & $14$ &     $16$      & $18$ &           $20$ \\ \hline
$p$  & {}     &  {}    &     {} & $17$ & $31$ &$691$&  $43,\,127$&$257,\,3617$ & $73,\,43867$     
\end{tabular}
\end{center}
\end{table}
\end{Ex}
\subsection{Theta operator}
For a Fourier series $F=\sum_T a(F;T)e^{2\pi i\tau(T,Z)}$, we formally set
$$
\Theta (F)(Z)=\sum_T a(F;T)\cdot (2\text{det}(T))e^{2\pi i\tau (T,Z)}.
$$
The Fourier series $\Theta(F)$ is not a modular form, even if $F$ is
such a form. We call this a {\it theta operator}.

In the cases of Siegel and Hermitian modular forms, we have some
results on the congruences arising from the theta operator (cf. \cite{B-K-N}, \cite{K-N-2}).

First, we prepare the following result.
\begin{Lem}
\label{weightp-1}{\it 
Let $p\geq 5$ be a prime number satisfying
$$
B_{p-3} \not\equiv 0 \pmod{p}.
$$
Then,
$$
E_{p-1,\mathbb{H}} \equiv 1 \pmod{p}.
$$
}
\end{Lem}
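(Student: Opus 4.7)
The plan is to read off the claim directly from the explicit Fourier expansion of $E_{p-1,\mathbb{H}}$ given in Theorem \ref{FC} (extended to $k=4,6$ by Remark \ref{R1}). Since the constant term is $1$, it suffices to show that every non-constant Fourier coefficient $a(E_{p-1,\mathbb{H}};T)$ is divisible by $p$.

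By the formula of Theorem \ref{FC}, $a(E_{p-1,\mathbb{H}};T)$ is a $\mathbb{Z}$-linear combination of the values $a_{p-1}^{*}(2\det(T)/d^{2})$, so it is enough to prove that $a_{p-1}^{*}(\ell)\equiv 0\pmod{p}$ for every $\ell\in\mathbb{N}\cup\{0\}$. The heart of the argument is a $p$-adic bookkeeping on
$$
a_{p-1}^{*}(\ell)=-\frac{4(p-1)(p-3)}{(2^{p-3}-1)\,B_{p-1}\,B_{p-3}}\bigl[\sigma_{p-4}(\ell)-2^{p-3}\sigma_{p-4}(\ell/4)\bigr]
$$
and on $a_{p-1}^{*}(0)=-2(p-1)/B_{p-1}$. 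The crucial input is the von Staudt--Clausen theorem, which yields $\operatorname{ord}_{p}(B_{p-1})=-1$, equivalently $pB_{p-1}\equiv -1\pmod{p}$. This places a single factor of $p$ in the numerator of $1/B_{p-1}$, which is the source of the congruence.

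The remaining factors are $p$-integral: $(p-1)(p-3)$ and the bracketed integer are plainly integers; $B_{p-3}$ is $p$-integral since $p-1\nmid p-3$ for $p\ge 5$, and is a unit modulo $p$ by the hypothesis $B_{p-3}\not\equiv 0\pmod{p}$; Fermat's little theorem gives $2^{p-3}\equiv 1/4\pmod{p}$, so $2^{p-3}-1\equiv -3/4\pmod{p}$ is a unit for every $p\ge 5$. Combining these factor-by-factor valuations gives $\operatorname{ord}_{p}(a_{p-1}^{*}(\ell))\ge 1$ for every $\ell\ge 0$, which is exactly what is needed.

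The argument is essentially routine once Theorem \ref{FC} is in hand, and there is no serious obstacle; the only point to be slightly careful about is that for $p\in\{5,7\}$ the Eisenstein series $E_{p-1,\mathbb{H}}$ lies outside the convergence range ``$k>6$'', and the Fourier expansion used above must be justified via the Hecke-sum extension recorded in Remark \ref{R1}.
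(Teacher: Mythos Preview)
Your proof is correct and follows essentially the same route as the paper: the paper also splits into the rank-$2$ and rank-$1$ cases (corresponding to your $\ell>0$ and $\ell=0$), invokes von Staudt--Clausen for $\operatorname{ord}_p\bigl(\tfrac{p-1}{B_{p-1}}\bigr)>0$, and uses the hypothesis on $B_{p-3}$ together with $2^{p-3}-1\not\equiv 0\pmod p$ to control the remaining factors. Your version is slightly more explicit in justifying the $p$-integrality of $B_{p-3}$ and in noting the appeal to Remark~\ref{R1} for $p\in\{5,7\}$, but the argument is the same.
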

\begin{proof}
It is sufficient to show that $a(E_{p-1,\mathbb{H}};T) \equiv 0 \pmod{p}$ for $T$ with $\text{rank}(T)\geq 1$.
If $\text{rank}(T)=2$, we have
$$
a(E_{p-1,\mathbb{H}};T)=\frac{(p-1)(p-3)}{(2^{p-3}-1)\,B_{p-1}B_{p-3}}\cdot (\text{an integer}).
$$
Based on von Staudt-Clausen's theorem, 
$\displaystyle \text{ord}_p\left(\tfrac{p-1}{B_{p-1}}\right)>0$. By contrast, based on the
assumption ``$B_{p-3}\not\equiv 0 \pmod{p}$'' and the fact that $2^{p-3} -1 \not\equiv 0 \pmod{p}$, we have
$$
\text{ord}_p\left(\frac{p-3}{(2^{p-3}-1)B_{p-3}}\right)\geq 0.
$$
From these estimates, we obtain $a(E_{p-1,\mathbb{H}};T) \equiv 0 \pmod{p}$.

When $\text{rank}(T)=1$, we have
$$
a(E_{p-1,\mathbb{H}};T)=\frac{p-1}{B_{p-1}}\cdot (\text{an integer}).
$$
Again, using von Staudt-Clausen's theorem, we obtain $a(E_{p-1,\mathbb{H}};T) \equiv 0 \pmod{p}$.
Consequently,
$$
E_{p-1,\mathbb{H}} \equiv 1 \pmod{p}.
$$
\end{proof}
\begin{Rem}
(1)\quad In the theory of mod $p$ modular forms, the modular form $F_{p-1}$
of weight $p-1$ satisfying
$$
F_{p-1} \equiv 1 \pmod{p}
$$
plays an important role in determining the ring structure (cf. \cite{B-N},\,\cite{K-N-M2}, and \cite{K-N-M1}).

The above lemma asserts that such an $F_{p-1}$ can be constructed if the
prime number $p\geq 5$ satisfies the condition $B_{p-1}\not\equiv 0 \pmod{p}$
in the case of quaternion modular forms of degree 2. It is expected that
the existence of such $F_{p-1}$ will be shown without any conditions for $p$
in the case of quaternionic modular forms.
\\
(2)\quad Regarding the condition $B_{p-3}\not\equiv 0\pmod{p}$, we can find the same one in 
\cite[Theorem 2,1,(2)]{Na}. To the best of the author's knowledge, there are two primes $p$
satisfying $B_{p-3} \equiv 0 \pmod{p}$:
$$
p=16843,\quad 2124679\qquad  {\rm (cf. \,\, \cite{Na2}).  }
$$
Of course, a {\it regular prime} $p$ satisfies $B_{p-3}\not\equiv 0\pmod{p}$.
\end{Rem}
The second main result is as follows.
\begin{Thm}
\label{Theta}{\it 
Let $p\geq 5$ be a prime number satisfying $B_{p-3}\not\equiv 0\pmod{p}$.
Then, for any $F\in [\Gamma_2(\mathcal{O}),k;\mathbb{Z}_{(p)}]$, there is a
cusp form 
$$X\in [\Gamma_2(\mathcal{O}),k+p+1;\mathbb{Z}_{(p)}]_0
$$ 
satisfying
$$
\Theta(F) \equiv X \pmod{p}.
$$
}
\end{Thm}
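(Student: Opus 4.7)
The plan is to adapt the proof of the analogous result for Siegel and Hermitian modular forms of degree 2 (cf.\ \cite{B-K-N}, \cite{K-N-2}) to the quaternionic setting. The shift $k \mapsto k+p+1$ is exactly what one gets by replacing a formal weight-$2$ quasi-modular correction of $\Theta F$ by a genuine mod $p$ modular form of weight $p+1$, and then re-balancing the residual weight-$(k+2)$ piece by multiplying with $E_{p-1,\mathbb{H}} \equiv 1 \pmod p$ from Lemma \ref{weightp-1}.

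The first observation is that $\Theta(F)$ is formally cuspidal: its Fourier coefficient at $T$ is $2\det(T)\,a(F;T)$, which vanishes whenever $T$ has rank $\leq 1$. Consequently, any modular form $X$ with $X \equiv \Theta(F) \pmod p$ satisfies $\Phi(X) \equiv 0 \pmod p$, and can therefore be adjusted to a true cusp form by subtracting a suitable $p$-multiple of a non-cuspidal weight-$(k+p+1)$ form with $p$-integral Fourier coefficients. The existence of such a correction term follows from the same surjectivity argument used in the proof of Theorem \ref{Ram}: the Siegel $\Phi$-operator sends the quaternionic subring generated by $E_{4,\mathbb{H}}, E_{6,\mathbb{H}},\ldots$ onto the corresponding elliptic subring generated by $E_4,E_6,\ldots$ in each fixed weight.

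The structural heart of the argument is a quaternionic analogue of Ramanujan's derivation identity: one needs a quasi-modular form $\mathcal{E}_2$ of weight $2$ and a constant $c_k$ proportional to $k$ such that
$$
\Theta(F)\,-\,c_k\,\mathcal{E}_2\,F
$$
is a genuine quaternionic modular form of weight $k+2$. This mirrors the elliptic identity expressing $\theta f - \tfrac{k}{12}E_2 f$ as a modular form of weight $k+2$, and is the failure-to-be-modular component of a natural Maass--Shimura type raising operator on the quaternionic upper half-space. Invoking the $p$-adic properties of Bernoulli numbers together with the hypothesis $B_{p-3}\not\equiv 0 \pmod p$, one then shows that $\mathcal{E}_2$ is congruent mod $p$ to an honest weight-$(p+1)$ quaternionic modular form (the analogue of Serre's congruence $E_2 \equiv E_{p+1} \pmod p$ in the elliptic case). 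Substituting this congruence and shifting the residual weight-$(k+2)$ piece up to weight $k+p+1$ by multiplication with $E_{p-1,\mathbb{H}}$ produces an explicit $X \in [\Gamma_2(\mathcal{O}),k+p+1;\mathbb{Z}_{(p)}]$ with $\Theta(F) \equiv X \pmod p$. The cuspidality adjustment described above then upgrades $X$ to a member of $[\Gamma_2(\mathcal{O}),k+p+1;\mathbb{Z}_{(p)}]_0$.

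The main obstacle is the structural step: establishing the quaternionic Ramanujan-type identity and pinning down the precise mod $p$ lift of $\mathcal{E}_2$. This is where the specific transformation law of quaternionic modular forms of degree $2$ enters, and it is also the step on which one would lean most heavily on the corresponding Siegel-degree-$2$ computation of \cite{B-K-N}; all remaining steps are bookkeeping using Lemma \ref{weightp-1} and the ring-theoretic closure of $p$-integrally normalised quaternionic modular forms.
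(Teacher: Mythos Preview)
Your outline is plausible but takes a more circuitous route than the paper, and the step you yourself flag as the ``main obstacle'' is exactly the one the paper's argument bypasses entirely.

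The paper's proof is a one-liner: set $X = [F, E_{p-1,\mathbb{H}}]$, the Rankin--Cohen bracket of $F$ with the weight-$(p-1)$ Eisenstein series. This bracket is automatically a cusp form of weight $k+(p-1)+2 = k+p+1$ with $p$-integral Fourier coefficients, and since $E_{p-1,\mathbb{H}} \equiv 1 \pmod p$ by Lemma~\ref{weightp-1}, the bracket collapses mod $p$ to $\Theta(F)$. No quasi-modular $\mathcal{E}_2$, no Ramanujan-type identity for quaternionic forms, no cuspidality correction after the fact.

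Your strategy --- write $\Theta(F) = c_k\,\mathcal{E}_2\,F + (\text{genuine weight }k{+}2\text{ form})$, replace $\mathcal{E}_2$ by a weight-$(p{+}1)$ form mod $p$, and lift the remaining piece by $E_{p-1,\mathbb{H}}$ --- is the analogue of the elliptic/Siegel argument, and it could in principle be made to work. But the structural identity and the mod $p$ lift of $\mathcal{E}_2$ in the quaternionic setting are not established anywhere in the paper, nor do you establish them; you only point to \cite{B-K-N} and say one would ``lean most heavily'' on it. That is the entire content of the theorem, so as written your proposal is a sketch with its load-bearing wall missing. The Rankin--Cohen bracket absorbs all of this structure into a single known construction, which is why the paper's proof fits in four lines.
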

\begin{proof}
For a given modular form $F$, we consider the Rankin-Cohen blacket
$[F,E_{p-1,\mathbb{H}}]$, where $E_{p-1,\mathbb{H}}$ is the Eisenstein
series of weight $p-1$ considered in the previous lemma.
The blacket $[F,E_{p-1,\mathbb{H}}]$ represents a cusp
form of weight $k+(p-1)+2=k+p+1$ with $p$-integral Fourier coefficients.
Moreover, by Lemma \ref{weightp-1}, 
$$
[F,E_{p-1,\mathbb{H}}] \equiv \Theta(F) \pmod{p}.
$$
This means that we may take $X=[F,E_{p-1,\mathbb{H}}]$.
\end{proof}
\begin{Ex}
(1)\quad $\Theta (G_{4,\mathbb{H}}) \equiv X_{10} \pmod{5}$
\vspace{2mm}
\\
As numerical examples,
{\footnotesize
\begin{align*}
1=
a\left(\Theta (G_{4,\mathbb{H}});\begin{pmatrix} 1 & \tfrac{e_1+e_2}{2}\\ \tfrac{e_1-e_2}{2} & 1 \end{pmatrix}
            \right) & \equiv
  a\left(X_{10};\begin{pmatrix} 1 & \tfrac{e_1+e_2}{2} \\ \tfrac{e_1-e_2}{2} & 1\end{pmatrix}\right)
=1 \pmod{5}\\
6=
a\left(\Theta (G_{4,\mathbb{H}});\begin{pmatrix} 1 & 0 \\ 0 & 1 \end{pmatrix}
            \right) & \equiv
  a\left(X_{10};\begin{pmatrix} 1 & 0 \\ 0 & 1\end{pmatrix}\right)
=-24 \pmod{5}\\
12=
a\left(\Theta (G_{4,\mathbb{H}});\begin{pmatrix} 1 & \tfrac{e_1+e_2}{2}\\ \tfrac{e_1-e_2}{2} & 2 \end{pmatrix}
            \right) & \equiv
  a\left(X_{10};\begin{pmatrix} 1 & \tfrac{e_1+e_2}{2} \\ \tfrac{e_1-e_2}{2} & 2 \end{pmatrix}\right)
=12 \pmod{5}\\
{}                 &  \vdots \\
\end{align*}
}
(2)\quad $\Theta (G_{6,\mathbb{H}}) \equiv X_{14} \pmod{7}$
\vspace{2mm}
\\
As further numerical examples,
{\footnotesize
\begin{align*}
1=
a\left(\Theta (G_{6,\mathbb{H}});\begin{pmatrix} 1 & \tfrac{e_1+e_2}{2}\\ \tfrac{e_1-e_2}{2} & 1 \end{pmatrix}
            \right) & \equiv
  a\left(X_{14};\begin{pmatrix} 1 & \tfrac{e_1+e_2}{2} \\ \tfrac{e_1-e_2}{2} & 1\end{pmatrix}\right)
=1 \pmod{7}\\
18=
a\left(\Theta (G_{6,\mathbb{H}});\begin{pmatrix} 1 & 0 \\ 0 & 1 \end{pmatrix}
            \right) & \equiv
  a\left(X_{14};\begin{pmatrix} 1 & 0 \\ 0 & 1\end{pmatrix}\right)
=-24 \pmod{7}\\
84=
a\left(\Theta (G_{6,\mathbb{H}});\begin{pmatrix} 1 & \tfrac{e_1+e_2}{2}\\ \tfrac{e_1-e_2}{2} & 2 \end{pmatrix}
            \right) & \equiv
  a\left(X_{14};\begin{pmatrix} 1 & \tfrac{e_1+e_2}{2} \\ \tfrac{e_1-e_2}{2} & 2 \end{pmatrix}\right)
=252 \pmod{7}\\
{}                 &  \vdots \\
\end{align*}
}
\end{Ex}
\subsection{Congruences mod 23}
In the process of determining the ring structure of degree 2 Siegel modular forms,
Igusa constructed a Siegel cusp form $X_{35}$ of odd weight 35.
In \cite{K-K-N}, the congruence relation
$$
\Theta (X_{35}) \equiv 0 \pmod{23}
$$
was reported. This fact means that, if $\text{det}(T)\not\equiv 0 \pmod{23}$,
then the corresponding Fourier coefficient $a(X_{35};T)$ is divisible by $23$.
After that, such Siegel modular forms were found one after another. For example,
$$
\Theta(E_{12,\mathbb{S}}) \equiv \Theta(\vartheta_{\mathcal{L}}) \equiv 0 \pmod{23}
$$
was proved, where $E_{12,\mathbb{S}}$ is the weight 12 Eisenstein series for the Siegel modular
group of degree 2, and $\vartheta_{\mathcal{L}}$ is the Siegel theta series associated
with the Leech lattice $\mathcal{L}$, which is also a Siegel modular form of weight 12.

In this section, we provide some congruence relation mod $23$ concerning
quaternionic modular forms.

In (\ref{x14}), we consider a cusp form $X_{14}$ defined by $X_{14}=E_{4,\mathbb{H}}X_{10}$.
\begin{Lem} {\it 
\label{FourierX14}
The modular form $X_{14}$ is an element of the Maass space $\mathcal{M}(14;\mathbb{H})_0$,
and the Fourier coefficient $a(X_{14};T)$ is given as follows:
\begin{align}
& a(X_{14};T)=\sum_{d\mid\varepsilon (T)}d^{13}\tau^*(2{\rm det}(T)/d^2) \label{F1}\\
& \tau^*(\ell)=\tau(\ell)-2^{12}\tau(\ell/4), \nonumber
\end{align}
where $\tau (n)$ is the Ramanujan tau function.
\\
In particular, $X_{14}\in\mathcal{M}(14;\mathbb{H};\mathbb{Z})_0$.
}
\end{Lem}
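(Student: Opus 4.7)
The plan is to identify $X_{14}$ with the Maass lift of $\Delta$ and then read off the coefficient formula. First I would verify that $X_{14}$ is a cusp form: since the Siegel $\Phi$-operator is a ring homomorphism on modular forms, and $\Phi(E_{4,\mathbb{H}})=E_4$, $\Phi(X_{10})=0$, one has
$$\Phi(X_{14})=\Phi(E_{4,\mathbb{H}})\cdot\Phi(X_{10})=E_4\cdot 0=0.$$
Integrality of $X_{14}$ is immediate from the integrality of $E_{4,\mathbb{H}}$ and $X_{10}$, so $X_{14}\in[\Gamma_2(\mathcal{O}),14;\mathbb{Z}]_0$.

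Next I would use Krieg's explicit description of the graded ring of degree-$2$ quaternionic modular forms (the setting quoted from \cite{K4}) to count the weight-$14$ cusp forms. Among the low-weight generators $E_{4,\mathbb{H}},E_{6,\mathbb{H}},X_{10},X_{12}$, the only monomial in weight $14$ producing a cusp form is $E_{4,\mathbb{H}}\cdot X_{10}=X_{14}$; since Krieg's presentation has no further generator at weight $\le 14$ of cuspidal type, this gives $\dim[\Gamma_2(\mathcal{O}),14]_0=1$.

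Then I would invoke Krieg's quaternionic Maass lift. Via the Jacobi-form construction that underlies the Eisenstein-series formula in Theorem \ref{FC} (where $\sigma_{k-3}$ arises as the coefficients of the weight-$(k-2)$ elliptic Eisenstein series), substituting the weight-$12$ cusp form $\Delta=\sum\tau(n)q^n$ for the Eisenstein input produces a Maass cusp form $F\in\mathcal{M}(14,\mathbb{H})_0$ with coefficient function
$$a_F^*(\ell)=\tau(\ell)-2^{12}\tau(\ell/4)=\tau^*(\ell).$$
Since $F$ is a nonzero element of the one-dimensional space $[\Gamma_2(\mathcal{O}),14]_0$, one must have $X_{14}=cF$ for some $c\in\mathbb{C}$. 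Normalizing at $T_0=\bigl(\begin{smallmatrix}1&(e_1+e_2)/2\\(e_1-e_2)/2&1\end{smallmatrix}\bigr)$, which satisfies $\varepsilon(T_0)=1$ and $2\det(T_0)=1$, gives $1=a(X_{14};T_0)=c\cdot\tau^*(1)=c$. Hence $X_{14}=F$, which proves both that $X_{14}\in\mathcal{M}(14,\mathbb{H};\mathbb{Z})_0$ and the formula (\ref{F1}).

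The main obstacle will be confirming that Krieg's graded ring has no additional weight-$14$ cusp generator (so that the one-dimensionality in Step 2 is genuine) and pinning down the exact shape and normalization of $\tau^*$ produced by the Maass lift in Step 3. If one wished to bypass appeal to the Maass lift entirely, an alternative would be to compute $a(E_{4,\mathbb{H}}\cdot X_{10};T)$ directly via the Fourier convolution, using the explicit Maass relations for both factors, and match term-by-term with the right-hand side of (\ref{F1}); this is more computational but self-contained.
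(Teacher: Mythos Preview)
Your approach is correct and lands on the same identification as the paper: $X_{14}$ is the Maass lift of $\Delta$, and the coefficient formula follows. The paper, however, does not argue via the dimension of the full cusp space. It invokes Krieg's isomorphism $\Omega:\mathcal{M}(k,\mathbb{H})\to\mathfrak{M}_{k-2}$ from \cite[Proposition~3]{K3} directly and asserts $\Omega(X_{14})=\Delta(z)-2^{12}\Delta(4z)$; the Fourier formula is then read off from the explicit inverse of $\Omega$. Your route---count that $[\Gamma_2(\mathcal{O}),14]_0$ is one-dimensional using the generators from \cite{K4}, build the Maass lift $F$ with $a_F^*=\tau^*$, and match by normalization at $T_0$---is a legitimate alternative that makes the hidden identification step explicit. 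The trade-off is that you take on the burden of verifying the dimension count (which, as you note, depends on there being no further weight-$\le 14$ cuspidal generator in Krieg's presentation), whereas the paper offloads everything to the structure of $\Omega$ already established in \cite{K3}. Both arguments ultimately rest on Krieg's Maass-lift machinery; yours is slightly more self-contained at the cost of the extra dimension check.
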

\begin{proof}
In \cite[Proposition 3]{K3}, Krieg constructed an isomorphism
$$
\Omega :\quad \mathcal{M}(k,\mathbb{H})\;\longrightarrow\;\mathfrak{M}_{k-2},
$$
where $\mathfrak{M}_k$ is a subspace of $[\Gamma_0(4),k]$ defined by certain conditions
on Hecke operators (cf. \cite{K3}). Under this isomorphism, we see the following:
$$
\Omega (X_{14})=\Delta(z)-2^{12}\Delta(4z).
$$
If we translate this fact into Fourier coefficients, we obtain (\ref{F1}).
\end{proof}
The third main result is as follows.
\begin{Thm}
\label{mod23}{\it 
Let $X_{14}$ be the cusp form defined above. If a matrix $T$ satisfies $\chi_{-23}(2{\rm det}(T))=-1$,
then the corresponding Fourier coefficient $a(X_{14},T)$ satisfies
$$
a(X_{14};T) \equiv 0 \pmod{23},
$$
where $\chi_{-23}(n)=\left(\frac{-23}{n}\right)$ is the Kronecker symbol.
}
\end{Thm}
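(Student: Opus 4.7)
The plan is to combine the explicit Maass-type formula for $a(X_{14};T)$ provided by Lemma \ref{FourierX14} with the classical congruence of Wilton for the Ramanujan tau function modulo $23$, namely
\begin{equation*}
\tau(n) \equiv 0 \pmod{23} \qquad \text{whenever } \chi_{-23}(n) = -1.
\end{equation*}
Wilton's original statement is for $n$ prime; the extension to arbitrary $n$ is a purely formal consequence of the Hecke recursion $\tau(p^{a+1}) = \tau(p)\tau(p^{a}) - p^{11}\tau(p^{a-1})$ together with the multiplicativity of both $\tau$ and $\chi_{-23}$, since $\tau(p^{a}) \equiv 0 \pmod{23}$ exactly when $a$ is odd (provided $\chi_{-23}(p)=-1$), which matches the parity condition dictated by $\chi_{-23}(n)=-1$.

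Starting from
\begin{equation*}
a(X_{14}; T) = \sum_{d \mid \varepsilon(T)} d^{13}\,\tau^{*}\!\bigl(2\det(T)/d^{2}\bigr), \qquad \tau^{*}(\ell) = \tau(\ell) - 2^{12}\tau(\ell/4),
\end{equation*}
I would first observe that the hypothesis $\chi_{-23}(2\det(T)) = -1$ forces $23 \nmid 2\det(T)$, and in particular $23 \nmid d$ for every $d \mid \varepsilon(T)$. Consequently $d^{2}$ is a square prime to $23$, and complete multiplicativity of the Kronecker symbol gives
\begin{equation*}
\chi_{-23}\!\bigl(2\det(T)/d^{2}\bigr) \;=\; \chi_{-23}(2\det(T))\,\chi_{-23}(d^{2})^{-1} \;=\; -1
\end{equation*}
for each such $d$. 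Thus the theorem reduces to the assertion that $\tau^{*}(\ell) \equiv 0 \pmod{23}$ whenever $\chi_{-23}(\ell) = -1$.

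For the first term $\tau(\ell)$ this is Wilton's congruence verbatim. For the second, $\tau(\ell/4)$ contributes only when $4 \mid \ell$; but in that case $\chi_{-23}(\ell/4) = \chi_{-23}(\ell) = -1$ since $4$ is a square prime to $23$, so Wilton applies again and $\tau(\ell/4) \equiv 0 \pmod{23}$. Hence every summand in the formula for $a(X_{14};T)$ is divisible by $23$, yielding the theorem.

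The argument is largely bookkeeping once the Maass formula (\ref{F1}) and Wilton's congruence are in hand; the only point that requires care is verifying that the Kronecker symbol really does transport through the Maass sum, which boils down to the observation above that the hypothesis automatically rules out $23 \mid d$. I therefore expect no serious obstacle beyond correctly invoking (and, if needed, citing) the form of Wilton's congruence valid for composite arguments.
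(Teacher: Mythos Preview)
Your proposal is correct and follows essentially the same route as the paper: reduce via the Maass formula of Lemma~\ref{FourierX14} to the claim that $\tau^{*}(\ell)\equiv 0\pmod{23}$ whenever $\chi_{-23}(\ell)=-1$, then deduce this from the congruence $\tau(p)\equiv 0\pmod{23}$ for primes with $\chi_{-23}(p)=-1$ together with the Hecke recursion and multiplicativity. The paper writes out the prime-power step in slightly more detail, while you are a bit more explicit about why $23\nmid d$; otherwise the arguments coincide.
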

\begin{proof}
First, we note that the following fact regarding $\tau(n)$ holds:
\\
If a prime number $p$ satisfies $\left(\frac{p}{23}\right)=-1$, then
$$
\tau (p) \equiv 0 \pmod{23}.
$$
We will now start the proof. Based on the previous Lemma, $a(X_{14};T)$ is given as
$$
a(X_{14};T)=\sum_{d\mid\varepsilon (T)}d^{13}\tau^*(2\text{det}(T)/d^2).
$$
We show that $\tau^*(2\text{det}(T)/d^2) \equiv 0 \mod{23}$ for any $d$ with $d\mid\varepsilon (T)$.
  
The assumption $\chi_{-23}(2\text{det}(T))=-1$ implies $\chi_{-23}(2\text{det}(T)/d^2)=-1$, 
and thus it is sufficient to show the following:
\vspace{2mm}
\\
$(**)$\quad If $\ell\in\mathbb{N}$ satisfies $\chi_{-23}(\ell)=-1$, then $\tau^*(\ell)\equiv 0 \pmod{23}$.
\vspace{2mm}
\\
When $\ell=4\ell'$, $\chi_{-23}(\ell)=-1$ is equivalent to $\chi_{-23}(\ell')=-1$. Hence, the following should be proved:
\begin{equation}
\label{tauell}
\tau (\ell) \equiv 0 \pmod{23}.
\end{equation}
We take the prime decomposition of $\ell$: $\ell=\prod p_i^{e_i}$\;(where $p_i$ is prime). From the assumption that $\chi_{-23}(\ell)=-1$,
there is a factor $p_i^{e_i}$ such that
$$
\chi_{-23}(p_i)=-1\quad \text{and}\quad e_i:\text{odd}.
$$
In general, the following recursion formula holds for any prime $p$:
$$
\tau(p^{n+1})=\tau(p^n)\tau(p)-p^{11}\tau(p^{n-1})\quad (n\geq 1).
$$
This fact implies that, if $\tau(p)\equiv 0 \pmod{23}$, then $\tau(p^{\text{odd}})\equiv 0\pmod{23}$.
\\
We apply this fact to the above prime number $p_i$. Because
$\chi_{-23}(p_i)=\left(\frac{-23}{p_i}\right)=\left(\frac{p_i}{23}\right)=-1$, we have $\tau(p_i)\equiv 0\pmod{23}$,
and necessarily, $\tau(p_i^{e_i}) \equiv 0 \pmod{23}$. Consequently
$$
\tau (\ell)=\tau(p_i^{e_i})\,\tau\big{(}\prod_{j\ne i}p_j^{e_j}\big{)}\equiv 0 \pmod{23}.
$$
This shows (\ref{tauell}), and thus $(**)$, which completes the proof of Theorem \ref{mod23}.
\end{proof}
\begin{Cor}
$$
\Theta_{\chi_{-23}}(X_{14})-\Theta (X_{14}) \equiv 0 \pmod{23}
$$
where $\Theta_\chi$ is a theta operator with character $\chi$ defined by
$$
\Theta_\chi (F)=\sum_{T}a(F;T)\cdot(2\text{det}(T))\cdot\chi(2\text{det}(T)e^{2\pi i\tau (T,Z)}
$$
for $F=\sum_T a(F;T)e^{2\pi i\tau (T,Z)}$.
\end{Cor}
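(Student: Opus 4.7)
The plan is to compare the two series term by term. The $T$-th Fourier coefficient of the difference $\Theta_{\chi_{-23}}(X_{14})-\Theta(X_{14})$ is
$$
a(X_{14};T)\cdot(2\mathrm{det}(T))\cdot\bigl(\chi_{-23}(2\mathrm{det}(T))-1\bigr),
$$
so it suffices to prove that this integer is divisible by $23$ for every $T\in\mathrm{Sym}^\tau(2;\mathcal{O})$, $T\geq 0$. Since the Kronecker symbol $\chi_{-23}$ takes values only in $\{-1,0,1\}$, I would split the argument into three short cases according to the value of $\chi_{-23}(2\mathrm{det}(T))$.

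If $\chi_{-23}(2\mathrm{det}(T))=1$, the factor $\chi_{-23}(2\mathrm{det}(T))-1$ vanishes, and the coefficient is zero. If $\chi_{-23}(2\mathrm{det}(T))=-1$, then Theorem \ref{mod23} applies directly and gives $a(X_{14};T)\equiv 0\pmod{23}$, so the whole coefficient is divisible by $23$. Finally, if $\chi_{-23}(2\mathrm{det}(T))=0$, the definition of the Kronecker symbol forces $23\mid 2\mathrm{det}(T)$, and the factor $(2\mathrm{det}(T))$ that appears in front of $a(X_{14};T)$ itself supplies the required factor of $23$.

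These three cases exhaust all possibilities, so every Fourier coefficient of $\Theta_{\chi_{-23}}(X_{14})-\Theta(X_{14})$ is divisible by $23$, yielding the stated congruence. The substantive content is entirely contained in Theorem \ref{mod23}; the corollary is a clean repackaging once one observes that the zero of the Kronecker symbol at multiples of $23$ is exactly compensated by the multiplicative factor $2\mathrm{det}(T)$ that the theta operator inserts. For this reason I do not anticipate any genuine obstacle, the only point requiring care being the bookkeeping of the degenerate case $\chi_{-23}(2\mathrm{det}(T))=0$, which as noted is automatic.
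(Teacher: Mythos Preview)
Your argument is correct and is exactly the intended deduction: the paper states this result as an immediate corollary of Theorem \ref{mod23} without giving a separate proof, and your three-case split (according to whether $\chi_{-23}(2\det T)$ equals $1$, $-1$, or $0$) is precisely the one-line verification that justifies it. The only minor point you might mention explicitly is that $X_{14}\in\mathcal{M}(14,\mathbb{H};\mathbb{Z})_0$ (Lemma \ref{FourierX14}), so $a(X_{14};T)\in\mathbb{Z}$ and the divisibility statements are about genuine integers.
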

\begin{Ex}
(1)\;If $T=\begin{pmatrix} 1 & \tfrac{e_1+e_2}{2} \\ \tfrac{e_1-e_2}{2} & 3 \end{pmatrix}$,
then $\chi_{-23}(2\text{det}(T))=\chi_{-23}(5)=-1$ and
$$
a(X_{14};T)=4830=2\cdot 3\cdot 5\cdot 7\cdot 23 \equiv 0 \pmod{23}.
$$
(2)\quad If  $T=\begin{pmatrix} 6 & 3(e_1+e_2) \\ 3(e_1-e_2) & 18\end{pmatrix}
                    =6\cdot\begin{pmatrix} 1 & \tfrac{e_1+e_2}{2} \\ \tfrac{e_1-e_2}{2} & 3 \end{pmatrix} $, and thus
$$
\varepsilon (T)=6\quad \text{and}\quad \chi_{-23}(2\text{det}(T))=\chi_{-23}(2^2\cdot 3^2\cdot 5)=-1.
$$
Then,
\begin{align*}
a(X_{14};T) &= \sum_{d\mid 6}d^{13}\tau^*(2\text{det}(T)/d^2)\\
              &=\tau^*(2^2\cdot 3^2\cdot 5)+2^{13}\tau^*(3^2\cdot 5)+3^{13}\tau^*(2^2\cdot 5)+6^{13}\tau^*(5),
\end{align*}
and
\begin{align*}
& \tau^*(2^2\cdot 3^2\cdot 5)=\tau(2^2\cdot 3^2\cdot 5)-2^{12}\tau(3^2\cdot 5) \equiv 0 \pmod{23},\\
& \tau^*(3^2\cdot 5)=\tau(3^2\cdot 5)\equiv 0 \pmod{23},\\
& \tau^*(2^2\cdot 5)=\tau(2^2\cdot 5)-2^{12}\tau(5)\equiv 0 \pmod{23},\\
& \tau^*(5)=\tau(5) \equiv 0 \pmod{23}.
\end{align*}
Therefore, we obtain $a(X_{14};T) \equiv 0 \pmod{23}$.
\end{Ex}
\subsection{Congruences of Eisenstein series}
In $\S$ \ref{section3}, we defined the Eisenstein series $G_{k,\mathbb{H}}$ of weight $k$.
We provide some congruence relatiion of $G_{k,\mathbb{H}}$:
\begin{Thm}
\label{CongEis}{\it
Let $G_{k,\mathbb{H}}$ be the quaternionic Eisenstein series of weight $k$ defined in {\rm (\ref{EisG})}.
Assume that $p=2k-5$ is a prime number.

If $T\in \text{Sym}^\tau (\mathcal{O})$ satisfies $\chi_{-p}(2\text{det}(T))=-1$,
then the corresponding Fourier coefficient $a(G_{k,\mathbb{H}},T)$ is divisible by $p$, namely,
$$
a(G_{k,\mathbb{H}};T) \equiv 0 \pmod{p},
$$
where $\chi_{-p}$ is the Kronecker symbol.
}
\end{Thm}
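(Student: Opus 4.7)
The plan is to reduce the claim, via the explicit formula in Proposition \ref{FourierCoefG}, to the assertion that a certain divisor sum of Legendre symbols vanishes modulo $p$, and then to exploit multiplicativity together with the hypothesis $\chi_{-p}(2\text{det}(T)) = -1$.

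First, since $k$ is even, the relation $p = 2k-5$ gives $p \equiv 3 \pmod{4}$, so $-p$ is a fundamental discriminant, and a short case check (including the value at $n=2$) identifies the Kronecker symbol $\chi_{-p}(n)$ with the Legendre symbol $\left(\tfrac{n}{p}\right)$ for all positive integers $n$ coprime to $p$. The hypothesis $\chi_{-p}(2\text{det}(T)) = -1$ then forces $p \nmid 2\text{det}(T)$, and since every $d \mid \varepsilon(T)$ satisfies $d^2 \mid 2\text{det}(T)$, also $p \nmid d$ for each such $d$. Consequently, for $\ell := 2\text{det}(T)/d^2$ we have $\chi_{-p}(\ell) = -1$. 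In view of the expansion
$$
a(G_{k,\mathbb{H}};T) = \sum_{d \mid \varepsilon(T)} d^{k-1} b_k^*(2\text{det}(T)/d^2)
$$
from Proposition \ref{FourierCoefG}, it therefore suffices to show $b_k^*(\ell) \equiv 0 \pmod{p}$ for every positive integer $\ell$ with $\chi_{-p}(\ell) = -1$.

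Second, writing $p = 2k-5$ yields $k-3 = (p-1)/2$, so the key exponent is $(p-1)/2$. By Fermat's little theorem $d^{(p-1)/2} \equiv \left(\tfrac{d}{p}\right) \pmod{p}$ for $\gcd(d,p) = 1$, whence, when $\gcd(\ell,p) = 1$,
$$
\sigma_{(p-1)/2}(\ell) \equiv \sum_{d \mid \ell} \left(\tfrac{d}{p}\right) \pmod{p}.
$$
This divisor sum is multiplicative in $\ell$, with local factor at $q^e$ (for $q \ne p$) equal to $\sum_{i=0}^{e}\left(\tfrac{q}{p}\right)^i$, which vanishes whenever $\left(\tfrac{q}{p}\right) = -1$ and $e$ is odd. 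The hypothesis $\chi_{-p}(\ell) = \prod_q \left(\tfrac{q}{p}\right)^{v_q(\ell)} = -1$ guarantees the existence of such a prime $q \mid \ell$, so the entire product vanishes and $\sigma_{(p-1)/2}(\ell) \equiv 0 \pmod{p}$.

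Third, the correction term in $b_k^*(\ell) = \sigma_{k-3}(\ell) - 2^{k-2}\sigma_{k-3}(\ell/4)$ is handled by the same argument: $\chi_{-p}(4) = 1$ gives $\chi_{-p}(\ell/4) = \chi_{-p}(\ell) = -1$ whenever $4 \mid \ell$, so the previous step applies to both $\ell$ and $\ell/4$, yielding $b_k^*(\ell) \equiv 0 \pmod{p}$ and hence $a(G_{k,\mathbb{H}};T) \equiv 0 \pmod{p}$. The only real obstacle is bookkeeping: verifying that $\chi_{-p}$ coincides with the Legendre symbol under $p \equiv 3 \pmod{4}$ (especially at $n=2$) and that $p$ divides no $d \mid \varepsilon(T)$, so that Fermat's little theorem applies cleanly to each $d^{(p-1)/2}$; both are routine.
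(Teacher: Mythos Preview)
Your proof is correct and follows essentially the same route as the paper: reduce via Proposition \ref{FourierCoefG} to showing $b_k^*(\ell)\equiv 0\pmod{p}$ when $\chi_{-p}(\ell)=-1$, observe $k-3=(p-1)/2$, apply Euler's criterion to rewrite $\sigma_{(p-1)/2}(\ell)$ as a product of local character sums, and locate a prime factor $q\mid\ell$ with $\chi_{-p}(q)=-1$ appearing to an odd power. Your version is in fact a bit more careful than the paper's in spelling out that $p\equiv 3\pmod 4$ makes $\chi_{-p}$ agree with the Legendre symbol and that $p\nmid\ell$, but the argument is the same.
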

\begin{proof}
We may assume that $T>0$. We recall the formula for $a(G_{k,\mathbb{H}};T)$:
\begin{align*}
& a(G_{k,\mathbb{H}};T)=\sum_{d\mid\varepsilon (T)}d^{k-1}b_k^*(2{\rm det}(T)/d^2)\\
& b_k^*(\ell)=\sigma_{k-3}(\ell)-2^{k-2}\sigma_{k-3}(\ell/4).
\end{align*}
We show that
\begin{equation}
\label{bcong1}
b_k^*(2{\rm det}(T)/d^2) \equiv 0 \pmod{p}.
\end{equation}
Since $\chi_{-p}(2\text{det}(T))=-1$ is equivalent to $\chi_{-p}(2\text{det}(T)/d^2)=-1$,
it is sufficient to show that, if $\chi_{-p}(\ell)=-1$, then
\begin{equation}
\label{bcong2}
b_k^*(\ell) \equiv 0 \pmod{p}.
\end{equation}
Since $\chi_{-p}(\ell')=\chi_{-p}(\ell)$ when $\ell=4\ell'$, the proof of (\ref{bcong2}) is reduced to showing
the following:
\begin{equation}
\label{bcong3}
\sigma_{k-3}(\ell)=\sigma_{\frac{p-1}{2}}(\ell) \equiv 0 \pmod{p}\quad \text{if}\quad
\chi_{-p}(\ell)=-1.
\end{equation}
Similar to the proof of Theorem \ref{mod23}, we take the prime decomposition
$\displaystyle \ell=\prod_{i=1}^np_i^{e_i}$. Then, by assumption, there is a factor $p_j$\;$(1\leq j\leq n)$
satisfying
$$
\chi_{-p}(p_j)=-1\quad\text{and}\quad e_j\;\text{:\,odd}.
$$
We obtain
$$
\sigma_{\frac{p-1}{2}}(\ell)=\sum_{d\\ell}d^{\frac{p-1}{2}}\equiv \sum_{d|\ell}\left(\frac{d}{p}\right)
=\sum_{d|\ell}\chi_{-p}(d)=\prod_{i=1}^n\sum_{k_i=0}^{e_i}\chi_{-p}(p_i^{k_i}) \pmod{p},
$$
by the Euler criterion. If we consider the above factor $p_j$, we have
$$
\sum_{k_j=0}^{e_j}\chi_{-p}(p_j^{k_j})=\sum_{k_j=0}^{e_j}(-1)^{k_j}=0.
$$
This proves (\ref{bcong3}) and completes the proof of Theorem \ref{CongEis}.
\end{proof}
\begin{Cor}{\it
If $T\in \text{Sym}^\tau (\mathcal{O})$ satisfies $\chi_{-p}(2{\rm det}(T))=-1$,
then
$$
a(G_{14,\mathbb{H}};T) \equiv 0 \pmod{23}.
$$
}
\end{Cor}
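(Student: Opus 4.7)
The plan is essentially to observe that the Corollary is the special case $k=14$ of Theorem \ref{CongEis}. First I would verify the arithmetic hypothesis: with $k=14$, one has $2k-5 = 23$, which is prime, so Theorem \ref{CongEis} applies directly with this choice of $k$ and $p$.

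Then, given any $T \in \mathrm{Sym}^\tau(\mathcal{O})$ with $\chi_{-23}(2\det(T)) = -1$, the hypothesis of Theorem \ref{CongEis} is satisfied verbatim (the character $\chi_{-p}$ in that theorem is precisely $\chi_{-23}$ here). The conclusion $a(G_{14,\mathbb{H}};T) \equiv 0 \pmod{23}$ follows immediately.

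Since the argument is a one-line specialization, there is no real obstacle; the only thing to note for the reader is the numerical coincidence $2 \cdot 14 - 5 = 23$, which explains why weight $14$ is the case of Theorem \ref{CongEis} that produces a mod $23$ congruence, paralleling the mod $23$ phenomenon observed for the cusp form $X_{14}$ in Theorem \ref{mod23}. It may be worth remarking in passing that this is consistent with the congruence $G_{14,\mathbb{H}} \equiv X_{14} \pmod{691}$ from Theorem \ref{Ram}, but the mod $23$ divisibility established here is an independent phenomenon, coming from the Euler-criterion argument applied to $\sigma_{(p-1)/2}$ rather than from the Ramanujan tau function.
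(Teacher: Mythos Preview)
Your proposal is correct and matches the paper's approach exactly: the paper offers no separate proof for this corollary, treating it as the immediate specialization of Theorem~\ref{CongEis} to $k=14$, $p=2k-5=23$, and simply remarks ``Compare with Theorem~\ref{mod23}.'' Your additional commentary about the parallel with $X_{14}$ and the independence from the mod~$691$ congruence is accurate and appropriate context.
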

\noindent
Compare with Theorem \ref{mod23}.
\vspace{4mm}
\\
\noindent


\begin{flushleft}
Shoyu Nagaoka\\
Department of Mathematics\\
Yamato University\\
Suita\\
Osaka 564-0082\\
Japan
\end{flushleft}

\end{document}